\newcommand{\comment}[1]{}
\newtheorem{theorem}{Theorem}[section]
\newtheorem{conjecture}[theorem]{Conjecture}
\newtheorem{proposition}[theorem]{Proposition}
\newtheorem{lemma}[theorem]{Lemma}
\theoremstyle{remark}
\newtheorem{remark}[theorem]{Remark}
\theoremstyle{definition}
\title{Lower bounds of Lipschitz constants on foliations}
\author{Guangxiang Su\footnote{Chern Institute of Mathematics \& LPMC, Nankai University,
Tianjin 300071, P.R. China.(guangxiangsu@nankai.edu.cn)\ Supported by NSFC 11571183.}}
\date{}
\begin{document}

\maketitle

\begin{abstract}
In this paper we consider Llarull's theorem in the foliation case and get a lower bound of the Lipschitz constant of the map $M\to S^n$ in the foliation case under the spin condition. 
\end{abstract}

\maketitle
\renewcommand{\theequation}{\thesection.\arabic{equation}}
\setcounter{equation}{0}

\section{Introduction}
\setcounter{equation}{0}

In \cite{G}, M. Gromov conjectured the following. 
\begin{conjecture}(Gromov)
Let $g$ be a Riemannian metric on $S^n$ such that $g\geq g_0$ where $g_0$ is the standard metric of constant curvature. Then the scalar curvature $k_g$ must become small somewhere, more precisely, $\inf k_g\leq c(n)k_{g_0}$, where $c(n)\leq 1$ is a constant that depends on the dimension $n$, with best constant when $c(n)=1$.
\end{conjecture}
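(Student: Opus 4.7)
The plan is to follow Llarull's spinor Dirac approach. Since $S^n$ is spin, this machinery applies directly. I would argue by contradiction, supposing $\inf k_g > n(n-1) = k_{g_0}$ pointwise, and construct a twisted Dirac operator whose index is topologically forced to be nonzero while the Lichnerowicz--Weitzenb\"ock curvature term is pointwise positive, a contradiction.

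\emph{Step 1 (setup).} The hypothesis $g \geq g_0$ says the identity map $f = \mathrm{id}: (S^n, g) \to (S^n, g_0)$ is $1$-Lipschitz and has degree one. Let $S_g$ denote the spinor bundle of $(S^n, g)$ (in even dimensions one uses the half-spinor bundles to make the index argument work), and let $E = f^* S_{g_0}$ be the pull-back of the spinor bundle of the round sphere, equipped with its pulled-back unitary connection.

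\emph{Step 2 (index).} Form the Dirac operator $D_E$ on $(S^n, g)$ twisted by $E$. By the Atiyah--Singer index theorem $\mathrm{ind}\, D_E$ is purely topological and equals a nonzero multiple of $\deg f = 1$; in particular $\ker D_E \neq 0$.

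\emph{Step 3 (Lichnerowicz).} Apply the Lichnerowicz formula
$$
D_E^2 = \nabla^*\nabla + \frac{k_g}{4} + \mathcal{R}^E ,
$$
where $\mathcal{R}^E$ is the curvature endomorphism arising from the twist by $E$. Llarull's pointwise estimate says that, because $E$ pulls back the explicit spinor curvature of the round sphere through a $1$-Lipschitz map, one has
$$
\mathcal{R}^E \geq -\frac{n(n-1)}{4}
$$
pointwise as a Hermitian endomorphism, with equality exactly when $df$ is an isometry at the point. This is proved by diagonalizing $df$ with singular values $\lambda_1,\dots,\lambda_n \leq 1$ and carrying out a combinatorial estimate in the spin representation.

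\emph{Step 4 (contradiction).} The assumption $k_g > n(n-1)$ then makes $\frac{k_g}{4} + \mathcal{R}^E > 0$ everywhere, and integrating $\langle D_E^2 s, s\rangle = \|\nabla s\|^2 + \langle (\frac{k_g}{4} + \mathcal{R}^E)s, s\rangle$ forces $\ker D_E = 0$, contradicting Step 2. Therefore $\inf k_g \leq k_{g_0}$, proving the conjecture with $c(n) = 1$.

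The main obstacle is the pointwise spinor curvature estimate in Step 3: controlling $\mathcal{R}^E$ via the singular values $\lambda_i \leq 1$ of $df$ demands the explicit form of the spinor curvature of $S^n$ together with a delicate representation-theoretic estimate, and the sharpness of the bound $n(n-1)/4$ is what pins down the best constant $c(n)=1$. This is precisely the ingredient that has to be upgraded in the foliation setting considered in the remainder of the paper, where the leafwise/transverse splitting and the use of a Connes-type index theorem introduce additional complications.
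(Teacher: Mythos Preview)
Your proposal is correct and follows Llarull's original argument, which is exactly what the paper invokes: the statement is recorded as a conjecture, and the paper does not give its own proof but cites Theorem~\ref{thmL} from \cite{L} as its resolution. So your approach and the paper's are the same by reference.

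One point you gloss over: the index argument in Step~2 only works as written when $n$ is even, since in odd dimensions the spinor bundle does not $\mathbb{Z}_2$-split and the ordinary index of $D_E$ vanishes. Llarull---and the present paper, when proving its own foliated theorem in Section~2.2---handles odd $n$ by passing to $M\times S^1_r$, composing with a degree-one map $S^{n}\times S^1\to S^{n+1}$, and letting $r\to\infty$ so that the extra $S^1$-direction contributes a term of order $1/r$ to the curvature estimate. You should add this reduction explicitly to close the argument in all dimensions.
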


A map $f: M\to N$ between Riemannian manifolds is said to be $\varepsilon$-contracting if $\|f_* v\|\leq \varepsilon \|v\|$ for tangent vectors $v$ to $M$ (cf. \cite{GL}).

The normalized scalar curvature of a manifold $M$ of dimension $n$ is defined as
$$\widetilde{k}={k\over{n(n-1)}},$$
where $k$ is the usual scalar curvature.

In \cite{L}, M. Llarull proved the following theorem which confirmed Gromov's conjecture. 

\begin{theorem}\label{thmL}(\cite{L})
Let $M$ be a compact Riemannian spin manifold of dimension $n$. Suppose there exists a $1$-contracting map $f:(M,g)\to (S^n, g_0)$ of non-zero degree. Then either there exists $x\in M$ with $\widetilde{k}_g (x)<1$, or $M\equiv S^n$ and $f$ is an isometry. 
\end{theorem}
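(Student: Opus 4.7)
\medskip

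\noindent\textbf{Proof plan.} The plan is to combine the Lichnerowicz formula with an Atiyah--Singer index computation, in the spirit of the classical arguments of Gromov--Lawson. I will first reduce to the case where $n$ is even by the standard trick of replacing $M$ with $M\times S^1$ and $f$ with $f\times {\rm id}$, so henceforth assume $n$ is even.

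\medskip

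\noindent\textbf{Step 1: twisted Dirac operator.} Let $S(TM)$ denote the spinor bundle of $M$ and let $E=f^{\ast}S(TS^{n})$ be the pullback of the (total) spinor bundle of the round sphere $S^{n}$, equipped with the pullback connection. Form the twisted Dirac operator $D_E$ acting on $S(TM)\otimes E$. Since $n$ is even, $S(TS^{n})=S_+(TS^{n})\oplus S_-(TS^{n})$, and an elementary Chern character computation yields $\ch(S_+(TS^{n}))-\ch(S_-(TS^{n}))$ equal to a nonzero multiple of the top-dimensional generator of $H^{n}(S^{n})$. Combining this with the Atiyah--Singer index theorem I would show
\[
 {\rm ind}\bigl(D_{f^{\ast}S_+}-D_{f^{\ast}S_-}\bigr)=c_n\deg(f)\neq 0,
\]
so that there exists a nonzero spinor $\psi\in\Gamma(S(TM)\otimes E)$ with $D_E\psi=0$.

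\medskip

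\noindent\textbf{Step 2: Lichnerowicz with a sphere-specific estimate.} Writing the Lichnerowicz--Bochner formula for $D_E$,
\[
 D_E^{2}=\nabla^{\ast}\nabla+\frac{k_g}{4}+\mathcal{R}^{E},
\]
where $\mathcal{R}^{E}=\tfrac{1}{2}\sum_{i,j}c(e_i)c(e_j)\otimes R^{E}(e_i,e_j)$. The main pointwise estimate to establish is
\[
 \bigl|\langle \mathcal{R}^{E}\psi,\psi\rangle\bigr|\leq \frac{n(n-1)}{4}\,|\psi|^{2}
\]
at each point of $M$. Here one uses that the curvature of $S(TS^{n})$ on the round sphere has the very explicit form $R^{S(TS^{n})}(X,Y)=\tfrac{1}{2}c(X)c(Y)$ (on $S^n$), so that the pullback $R^{E}$ is expressed in terms of $f^{\ast}$ of the canonical curvature operator; the $1$-contracting hypothesis $\|f_{\ast}v\|\leq\|v\|$ then bounds the associated symmetric endomorphism by the eigenvalues of the round sphere's curvature operator, whose trace produces the factor $n(n-1)/2$, and the Clifford action costs another factor $1/2$.

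\medskip

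\noindent\textbf{Step 3: integrated inequality and rigidity.} Integrating $\langle D_E^{2}\psi,\psi\rangle=0$ over $M$ and applying the estimate of Step 2 gives
\[
 0=\int_M|\nabla\psi|^{2}+\int_M\Bigl(\frac{k_g}{4}+\langle\mathcal{R}^{E}\psi,\psi\rangle\Bigr)
 \geq \int_M\Bigl(\frac{k_g}{4}-\frac{n(n-1)}{4}\Bigr)|\psi|^{2}.
\]
If $\widetilde{k}_g(x)\geq 1$ everywhere, then $k_g\geq n(n-1)$ everywhere, forcing $k_g\equiv n(n-1)$ on the support of $\psi$, $\nabla\psi\equiv 0$, and equality in Step 2 at every such point. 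Unpacking equality in the curvature estimate should show that at every point $f_{\ast}$ is an isometry on the tangent space; combined with connectedness and the degree hypothesis this forces $f$ to be a Riemannian covering of $S^{n}$, hence an isometry (and $M\equiv S^{n}$). The odd-dimensional case then follows by the $S^{1}$-product reduction.

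\medskip

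\noindent\textbf{Anticipated main difficulty.} The chief technical hurdle is not the index computation but the pointwise estimate of Step 2: one must handle the full Clifford-action expression for $\mathcal{R}^{E}$ and obtain a bound that is \emph{sharp} with equality exactly when $f_{\ast}$ is an isometry, since the rigidity statement depends on extracting geometric information from the equality case. This requires a careful choice of local frames adapted simultaneously to an eigenbasis of $f^{\ast}g_0$ and to the Clifford module structure, together with a spectral analysis of the curvature endomorphism on $S^{n}$.
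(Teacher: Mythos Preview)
The paper does not prove this theorem; it is quoted from \cite{L} as background and motivation, and no proof is given in the paper itself. Your outline is precisely Llarull's original argument, and indeed the paper reuses the very same ingredients --- the bundle $E_0$ over $S^{2n}$, the pointwise curvature estimate (cited there as \cite[Lemmas 4.3, 4.5]{L}), and the $M\times S^1_r$ trick for odd dimensions --- when it proves the foliation generalization in Section~2.

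One correction worth flagging: in your odd-dimensional reduction, the map $f\times{\rm id}$ lands in $S^{n}\times S^{1}$, not in a sphere; you must further compose with a nonzero-degree map $\mathfrak{h}:S^{n}\times S^{1}\to S^{n}\wedge S^{1}\cong S^{n+1}$, exactly as the paper does in Section~2.2 (and as Llarull does). The rigidity conclusion in the odd case then also requires taking the $S^{1}$-radius $r\to\infty$ to absorb the extra $O(1/r)$ term that this composition introduces in the curvature estimate. Apart from this and an inessential bundle choice (the paper pulls back the Clifford-module bundle $P_{{\rm Spin}_{2n}}(S^{2n})\times_{\lambda}\mathbb{C}l_{2n}$ rather than the spinor bundle $S(TS^{n})$, which changes nothing in the index or curvature arguments), your plan is correct and matches \cite{L}.
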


Recall that for a map $f:M\to S^n$ the Lipschitz constant (cf. \cite{G2, GL}) is defined by 
\begin{align}
Lip(f)=\sup_{x_1\neq x_2}{{dist_{S^n}(f(x_1),f(x_2))}\over{dist_M(x_1,x_2)}}.
\end{align}
In \cite[Section 3]{G2}, Gromov pointed out that Theorem \ref{thmL} is related to the problems of the Lipschitz constants of the maps $M\to S^n$.

Let $F$ be an integrable subbundle of the tangent vector bundle $TM$ of a closed smooth manifold $M$. Let $g^F$ be a metric on $F$, and $k^F_g\in C^\infty (M)$ be the associated leafwise scalar curvature (cf. \cite[(0.1)]{Z1}). Let $\widetilde{k}^F_g$ be the normalized leafwise scalar curvature, i.e.
$$\widetilde{k}^F_g={{k^F_g}\over{{\rm dim}F({\rm dim}F-1)}}.$$

In this paper, we prove the following theorem which partly generalize Theorem \ref{thmL} in the foliation case.

\begin{theorem}\label{thm1.1}

Let $M$ be a closed Riemannian manifold of dimension $n$ and $F$ be a foliation on $M$. Suppose $TM$ or $F$ is spin and there exist a smooth map $f:(M,g)\to (S^n, g_0)$ of non-zero degree such that for any $v\in \Gamma(F)$, $\|f_*(v)\| \leq \|v\|$. Then there exists $x\in M$ with $\widetilde{k}^F_g (x)\leq 1$. 

\end{theorem}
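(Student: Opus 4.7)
The strategy is to combine Llarull's twisted--Dirac argument with the sub--Dirac operator for foliations introduced by Zhang \cite{Z1}, together with an adiabatic rescaling of the metric on $F^\perp$. Fix a metric with orthogonal splitting $TM = F \oplus F^\perp$. Under either spin assumption, construct a $\bZ/2$--graded Hermitian Clifford bundle
\[
\mathcal{E} \;=\; S(F)\,\widehat{\otimes}\,\Lambda^\bullet(F^\perp\otimes\bC)
\]
(or its $TM$--spin variant when it is $TM$ rather than $F$ that is assumed spin), on which $TM$ acts by spinor Clifford multiplication along $F$ and by the standard exterior--interior Clifford action on $\Lambda^\bullet$ along $F^\perp$. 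Twist $\mathcal{E}$ by the pull--back spinor bundle $f^*S(TS^n)$ and let $D_f$ denote the resulting sub--Dirac operator on $M$.

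Next, rescale the metric on the normal directions as $g^{F^\perp}/\var^2$ with $\var\downarrow 0$ and expand the Bochner identity for $D_f^2$. By the computations of Zhang in \cite{Z1}, the contributions from $k^{F^\perp}$, from the second fundamental form of $F$, and from the mixed Clifford curvatures are either of order $O(\var^a)$ for some $a>0$, or are purely off--diagonal and controllable; one obtains
\[
D_f^2 \;\geq\; \nabla^*\nabla \,+\, \tfrac{1}{4}\,k_g^F \,+\, R^{f^*S(TS^n)}\big|_F \,-\, \delta(\var),
\]
with $\delta(\var)\to 0$. Since $(S^n,g_0)$ has constant sectional curvature $1$ and $f_*$ is $1$--Lipschitz on $F$, the pointwise estimate of \cite{L} applied leafwise gives $R^{f^*S(TS^n)}|_F \geq -n(n-1)/4$. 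Assuming for contradiction that $\widetilde{k}_g^F(x) > 1$ everywhere, one has $k_g^F/4 > n(n-1)/4$, so for $\var$ small enough the right--hand side above is strictly positive pointwise, and $\Ker D_f = 0$. On the other hand, the index formula for $D_f$ contains $\ch(f^*S(TS^n))$ as a twisting factor, which, as in Llarull's original computation, carries a non--zero multiple of $\deg(f)$ times the fundamental class of $S^n$; hence $\mathrm{ind}\, D_f \neq 0$, a contradiction.

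The delicate step is the adiabatic Bochner formula. One must verify that, within Zhang's sub--Dirac framework, none of the indefinite curvature contributions coming from $F^\perp$ and from its interaction with $F$ survive the limit $\var\to 0$ in a way that could outweigh $\tfrac{1}{4} k_g^F + R^{f^*S(TS^n)}|_F$. This is a Bismut--Zhang type asymptotic computation; once it is in place, appending the twist by $f^*S(TS^n)$ and invoking Llarull's leafwise pointwise estimate yields the required conclusion.
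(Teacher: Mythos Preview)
Your outline has the right spirit---twist a sub-Dirac operator by $f^*S(TS^n)$, run an adiabatic limit, invoke Llarull's pointwise curvature estimate along $F$, and contradict a nonvanishing index---but it skips the step that carries the whole argument. You propose to work directly on $M$ with the rescaled metric $g^F\oplus g^{F^\perp}/\var^2$ and cite \cite{Z1} for the claim that the indefinite contributions (second fundamental form of the leaves, curvature of $F^\perp$, mixed Clifford terms) are $O(\var^a)$. That is not what \cite{Z1} proves: Zhang's estimates are carried out on the \emph{Connes fibration} $\pi:\mathcal{M}\to M$, where the lifted foliation $\mathcal{F}$ enjoys Connes's ``almost isometric'' transverse structure, and it is precisely this structure that forces the bad terms to be of lower order. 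On a general foliated $(M,F)$ the naive adiabatic limit on $M$ does \emph{not} kill these terms; this is exactly why the Connes fibration was introduced. So the sentence ``By the computations of Zhang in \cite{Z1} \ldots'' is a genuine gap, not a routine citation. The paper accordingly lifts everything to $\mathcal{M}$, uses a two-parameter rescaling $\beta^2 g^{\mathcal F}\oplus \var^{-2}g^{\mathcal F_1^\perp}\oplus g^{\mathcal F_2^\perp}$, and obtains the Bochner inequality there with error $O_R(1/\beta+\var^2/\beta^2)$.

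Two further points. First, because $\mathcal{M}$ is noncompact, one cannot simply read off an index; the paper truncates to $\mathcal{M}_R$, adds a Witten-type potential $\mathfrak f(\rho/R)\widehat c(\sigma)/\beta$, glues on a cap $\mathcal{N}_R$, and builds a zeroth-order elliptic pseudodifferential operator $P^{\mathcal E}_{R,\beta,\var}$ whose index equals $\langle\widehat A(TM)\ch(E),[M]\rangle$. None of this appears in your plan, and it is not optional once you pass to the Connes fibration. Second, a numerical slip: under $\widetilde{k}^F_g>1$ you have $k^F_g>\dim F(\dim F-1)$, not $n(n-1)$, and correspondingly the leafwise Llarull bound on the twisting curvature is $-\tfrac{1}{4}\dim F(\dim F-1)$, not $-\tfrac{1}{4}n(n-1)$; the two errors happen to cancel, but you should get the constants right.
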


From Theorem \ref{thm1.1}, one sees that the leafwise scalar curvature is also related to the lower bounds of the Lipschitz constants. If $\widetilde{k}^F_g>1$, then there exists $v\in \Gamma(F)$, such that $\|f_*(v)\|> \|v\|$. So from Theorem \ref{thm1.1}, one has the following theorem. 

\begin{theorem}
Let $M$ be a closed Riemannian manifold of dimension $n$ and $F$ be a foliation on $M$. Suppose $TM$ or $F$ is spin and $\widetilde{k}^F_g>1$, then for  smooth maps $f:(M,g)\to (S^n, g_0)$ of non-zero degree, we have $Lip(f)> 1$. 
\end{theorem}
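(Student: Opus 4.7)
The statement is really a corollary of Theorem \ref{thm1.1}, so I plan to argue by contradiction. Suppose $Lip(f)\leq 1$; I will contradict the pointwise hypothesis $\widetilde{k}^F_g>1$.

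First, I would recall that for a smooth map $f$ between Riemannian manifolds the Lipschitz constant equals the supremum of the operator norms of the differentials, namely
\begin{equation*}
Lip(f)=\sup_{x\in M}\,\sup_{0\neq v\in T_xM}\frac{\|f_*v\|}{\|v\|},
\end{equation*}
which follows from integrating $\|f_*\|$ along minimizing geodesics in $M$ and projecting to $S^n$. Hence $Lip(f)\leq 1$ is equivalent to $\|f_*v\|\leq\|v\|$ for every tangent vector $v\in TM$. Restricting this inequality to vectors tangent to the foliation yields in particular $\|f_*v\|\leq\|v\|$ for all $v\in\Gamma(F)$.

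With this observation, all hypotheses of Theorem \ref{thm1.1} are in place: $M$ is closed Riemannian, $F$ is a foliation, $TM$ or $F$ is spin, $f$ has non-zero degree, and the leafwise $1$-contraction property holds. Theorem \ref{thm1.1} therefore produces a point $x\in M$ with $\widetilde{k}^F_g(x)\leq 1$, directly contradicting the assumption that $\widetilde{k}^F_g>1$ pointwise on $M$. Consequently $Lip(f)>1$, as claimed.

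There is no real obstacle to this argument, since the analytic content sits entirely in Theorem \ref{thm1.1}; the only thing to verify is the routine identity between the Lipschitz constant of a smooth map and the sup-norm of its differential, which in turn ensures that the Lipschitz hypothesis is strictly stronger than the leafwise contraction hypothesis needed to invoke Theorem \ref{thm1.1}.
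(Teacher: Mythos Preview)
Your proposal is correct and matches the paper's approach: the paper derives this theorem directly from Theorem~\ref{thm1.1} by the same contrapositive, noting that if $\widetilde{k}^F_g>1$ then there must exist $v\in\Gamma(F)$ with $\|f_*(v)\|>\|v\|$, which forces $Lip(f)>1$. Your added remark identifying $Lip(f)$ with $\sup_x\|f_{*,x}\|$ is the only extra detail, and it is routine.
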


Our proof of Theorem \ref{thm1.1} combines the methods in \cite{L}, \cite{Z1} and \cite{Z2}. It is based on deforming (twisted) sub-Dirac operators on the Connes fibration. It will be carries out in Section 2.

\section{Proof of Theorem \ref{thm1.1}}
\setcounter{equation}{0}
In this section, we give a proof of the main theorem. We give the details for the case $TM$ is spin, $F$ is spin is similarly.

If there does not exist any point such that $\widetilde{k}^F_g\leq 1$, then there exits $\delta>0$ such that $\widetilde{k}^F_g(x)-1\geq \delta$, for any $x\in M$.

\subsection{The dimension of $M$ is even}
Over $(S^{2n}, g_0)$, we have the spinor bundle (cf. \cite{LM})
\begin{align}
E_0=P_{Spin_{2n}}\left(S^{2n}\right)\times_{\lambda}\mathbb{C}l_{2n},
\end{align}
with the induced metric and connection from $(S^{2n},g_0)$. Fix $x\in S^{2n}$ and choose local point wise orthonormal tangent vector fields around $x$, $\{\varepsilon_1, \varepsilon_2,\dots,\varepsilon_{2n}\}$ such that $(\nabla \varepsilon_k)_x=0$. Let $\omega_0$ 
$$\omega_0=i^n \varepsilon_1\cdot \varepsilon_2\cdots \varepsilon_{2n}.$$
Then $\omega_0$ gives the splitting 
$$E_0=E_0^{+}\oplus E_0^{-}$$
into the $+1$ and $-1$ eigenspaces of $\omega_0$.

Fix $p\in M$. Let $\{f_1,\dots, f_{2n}\}$ be a $g$-orthonormal tangent frame near $p\in M$ such that 
$(\nabla f_k)_p=0$ for each $k$. Let $\{e_1,\dots,e_{2n}\}$ be a $g_0$-orthonormal tangent frame near $f(p)\in S^{2n}$ such that $(\nabla e_k)_{f(p)}=0$ for each $k$. Moreover, the bases $\{f_1,\dots,f_{2n}\}$ and $\{e_1,\dots,e_{2n}\}$ can be chosen so that $e_j=\lambda_j f_* f_j$ for appropriate $\{\lambda_j\}_{j=1}^{2n}$.

Assume $\{f_j\}, 1\leq j\leq {\rm dim}F$, is a base of $\Gamma(F)$. By assumption, we have $\lambda_j\geq 1$, $1\leq j\leq {\rm dim}F$.

Following \cite[Section 5]{C} (cf. \cite[Section 2.1]{Z1}), let $\pi:\mathcal{M}\to M$ be the Connes fibration over $M$ such that for any $x\in M$, $\mathcal{M}_x=\pi^{-1}(x)$ is the space of Euclidean metrics on the linear space $T_x M/F_x$. Let $T^V \mathcal{M}$ denote the vertical tangent bundle of the fibration $\pi: \mathcal{M}\to M$. Then it carries a natural metric $g^{T^V \mathcal{M}}$.

By using the Bott connection on $TM/F$, which is leafwise flat, one lifts $F$ to an integrable subbundle $\mathcal{F}$ of $T\mathcal{M}$. Then $g^F$ lifts to a Euclidean metric $g^{\mathcal{F}}=\pi^* g^F$ on $\mathcal{F}$.

Let $\mathcal{F}^{\bot}_{1}\subset T\mathcal{M}$ be a subbundle, which is transversal to $\mathcal{F}\oplus T^V \mathcal{M}$, such that we have a splitting $T\mathcal{M}=(\mathcal{F}\oplus T^V \mathcal{M})\oplus \mathcal{F}^{\bot}_{1}$. Then $\mathcal{F}^{\bot}_{1}$ can be identified with $T\mathcal{M}/(\mathcal{F}\oplus T^V \mathcal{M})$ and carries a canonically induced metric $g^{\mathcal{F}^{\bot}_{1}}$. We denote $\mathcal{F}^{\bot}_{2}=T^V \mathcal{M}$.

Set $E=f^* E_0$. Let $\mathcal{E}=\pi^* E$ be the lift of $E$ which carries the lifted Hermitian metric $g^{\mathcal{E}}=\pi ^* g^E$ and the lifted Hermitian connection $\nabla^{\mathcal{E}}=\pi^* \nabla^E$. Let $R^{\mathcal{E}}=(\nabla^{\mathcal{E}})^2$ be the curvature of $\nabla^{\mathcal{E}}$.

For any $\beta, \varepsilon>0$, following \cite[(2.15)]{Z1}, let $g_{\beta,\varepsilon}^{T\mathcal{M}}$ be the metric on $T\mathcal{M}$ defined by the orthogonal splitting,
\begin{align}
T\mathcal{M}=\mathcal{F}\oplus \mathcal{F}^{\bot}_{1}\oplus \mathcal{F}^{\bot}_{2},\ g^{T\mathcal{M}}_{\beta,\varepsilon}=\beta^2 g^{\mathcal{F}}\oplus {{g^{\mathcal{F}^{\bot}_{1}}}\over{\varepsilon^2}}\oplus g^{\mathcal{F}^{\bot}_{2}}. 
\end{align}

Now we replace the sub-Dirac operator constructed in \cite[(2.16)]{Z1} by the obvious twisted (by $\mathcal{E}$) analogue (\cite[(1.3)]{Z2})

\begin{multline}
D^{\mathcal{E}}_{\mathcal{F}\oplus \mathcal{F}^{\bot}_{1},\beta,\varepsilon}:\Gamma\left(S_{\beta,\varepsilon}\left(\mathcal{F}\oplus \mathcal{F}^{\bot}_{1}\right)\widehat{\otimes}\Lambda^*\left(\mathcal{F}^{\bot}_{2}\right)\otimes \mathcal{E}\right) \\
\to \Gamma\left(S_{\beta,\varepsilon}\left(\mathcal{F}\oplus \mathcal{F}^{\bot}_{1}\right)\widehat{\otimes}\Lambda^*\left(\mathcal{F}^{\bot}_{2}\right)\otimes \mathcal{E}\right).
\end{multline}

Take a metric on $TM/F$. This is equivalent to taking an embedded section $s:M\hookrightarrow\mathcal{M}$ of the Connes fibration $\pi: \mathcal{M}\to M$. Then we have a canonical inclusion $s(M)\subset \mathcal{M}$.

For any $p\in\mathcal{M}\setminus s(M)$, we connect $p$ and $s(\pi(p))\in s(M)$ by the unique geodesic in $\mathcal{M}_{\pi(p)}$. Let $\sigma(p)\in\mathcal{F}^{\bot}_{2}|_{p}$ denote the unit vector tangent to this geodesic. Let $\rho(p)=d^{\mathcal{M}_{\pi(p)}}(p,s(\pi(p)))$ denote the length of this geodesic.

Let $\mathfrak{f}:[0,1]\to [0,1]$ be a smooth function such that $\mathfrak{f}(t)=0$ for $0\leq t\leq {1\over 4}$, while $\mathfrak{f}(t)=1$ for ${1\over 2}\leq t\leq 1$. Let $h:[0,1]\to [0,1]$ be a smooth function such that $h(t)=1$ for $0\leq t\leq {3\over 4}$, while $h(t)=0$ for ${7\over 8}\leq t\leq 1$. 

For any $R>0$, denote
\begin{align}
\mathcal{M}_{R}=\left\{p\in\mathcal{M}: \rho(p)\leq R\right\}.
\end{align}
Then $\mathcal{M}_R$ is a smooth manifold with boundary.

On the other hand, the following formula holds on $\mathcal{M}_R$ (cf. \cite[(2.28)]{Z1}, \cite[(1.4)]{Z2})
\begin{multline}\label{a2.30}
\left(D^{\mathcal{E}}_{\mathcal{F}\oplus \mathcal{F}^{\bot}_{1},\beta,\varepsilon}\right)^2=-\Delta^{\mathcal{E},\beta,\varepsilon}+{{k^{\mathcal{F}}}\over{4\beta^2}}+{1\over{2\beta^2}}\sum_{i,j=1}^{{\rm rk}F}R^{\mathcal{E}}(f_i,f_j)c_{\beta,\varepsilon}(\beta^{-1}f_i)c_{\beta,\varepsilon}(\beta^{-1}f_j)\\+\mathcal{O}_{R}\left({1\over \beta}+{\varepsilon^2\over \beta^2}\right),
\end{multline}
where $-\Delta^{\mathcal{E},\beta,\varepsilon}\geq 0$ is the corresponding Bochner Laplacian, $k^{\mathcal{F}}=\pi^* k^F$ and $f_1,\cdots, f_{{\rm rk}F}$ is an orthonormal basis of $(\mathcal{F},g^{\mathcal{F}})$.

Since \cite[Lemma 4.3]{L} and \cite[Lemma 4.5]{L} hold for fixed $(i,j)$, 
proceeding as the computations in \cite[Lemmas 4.3, 4.5]{L}, for any $\phi\in \Gamma(S_{\beta,\varepsilon}(\mathcal{F}\oplus \mathcal{F}^{\bot}_{1})\widehat{\otimes}\Lambda^*(\mathcal{F}^{\bot}_{2})\otimes\mathcal{E})$ supported in $\mathcal{M}_R$,
one has
\begin{multline}\label{add2.6}
\left\langle{1\over{2\beta^2}}\sum_{i,j=1}^{{\rm rk}F}R^{\mathcal{E}}(f_i,f_j)c_{\beta,\varepsilon}(\beta^{-1}f_i)c_{\beta,\varepsilon}(\beta^{-1}f_j)\phi,\phi\right\rangle\\
\geq -{1\over{4\beta^2}}{\rm dim}F({\rm dim}F-1)\|\phi\|^2.
\end{multline}
Then by (\ref{a2.30}) and (\ref{add2.6}), for any $\phi\in \Gamma(S_{\beta,\varepsilon}(\mathcal{F}\oplus \mathcal{F}^{\bot}_{1})\widehat{\otimes}\Lambda^*(\mathcal{F}^{\bot}_{2})\otimes\mathcal{E})$ supported in $\mathcal{M}_R$, one gets
\begin{multline}\label{a2.31}
\left\langle  \left(D^{\mathcal{E}}_{\mathcal{F}\oplus \mathcal{F}^{\bot}_{1},\beta,\varepsilon}\right)^2 \phi,\phi\right\rangle\\
\geq {1\over {4\beta^2}}{\rm dim}F ({\rm dim}F-1)\left(\widetilde{k}^{\mathcal{F}}_g-1\right)\|\phi\|^2+\mathcal{O}_{R}\left({1\over \beta}+{\varepsilon^2\over \beta^2}\right)\|\phi\|^2. 
\end{multline}

From (\ref{a2.31}), proceeding as the proof of \cite[Lemma 2.4]{Z1}, one can get the following analogue inequality of \cite[(2.22)]{Z1}.

\begin{lemma}\label{l2.1}
There exist $C_0$, $R_0>0$, such that for any (fixed) $R\geq R_0$, when $\beta$, $\varepsilon>0$ (which may depend on $R$) are small enough, for any $\phi\in \Gamma(S_{\beta,\varepsilon}(\mathcal{F}\oplus \mathcal{F}^{\bot}_{1})\widehat{\otimes}\Lambda^*(\mathcal{F}^{\bot}_{2})\otimes\mathcal{E})$ supported in $\mathcal{M}_R$, one has 
\begin{align}\label{a2.24}
\left\|\left(D^{\mathcal{E}}_{\mathcal{F}\oplus \mathcal{F}^{\bot}_{1},\beta,\varepsilon}+{{\mathfrak{f}\left({\rho\over R}\right)\widehat{c}(\sigma})\over{\beta}}\right)\phi\right\|\geq {C_0\sqrt{\delta}\over \beta}\|\phi\|.
\end{align}
\end{lemma}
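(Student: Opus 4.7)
The plan is to estimate $\|A\phi\|^{2}=\langle A^{2}\phi,\phi\rangle$ from below, where $A=D^{\mathcal{E}}_{\mathcal{F}\oplus\mathcal{F}^{\bot}_{1},\beta,\varepsilon}+\beta^{-1}\mathfrak{f}(\rho/R)\widehat{c}(\sigma)$, and then take a square root. The argument should follow the structure of \cite[Lemma 2.4]{Z1}, with the Lichnerowicz-type inequality (\ref{a2.31}) playing the role of \cite[(2.19)]{Z1}. Expanding and using formal self-adjointness,
\begin{equation*}
A^{2}=\bigl(D^{\mathcal{E}}_{\mathcal{F}\oplus\mathcal{F}^{\bot}_{1},\beta,\varepsilon}\bigr)^{2}+\frac{1}{\beta}\bigl\{D^{\mathcal{E}}_{\mathcal{F}\oplus\mathcal{F}^{\bot}_{1},\beta,\varepsilon},\,\mathfrak{f}(\rho/R)\widehat{c}(\sigma)\bigr\}+\frac{\mathfrak{f}(\rho/R)^{2}}{\beta^{2}},
\end{equation*}
where $\widehat{c}(\sigma)^{2}=|\sigma|^{2}=1$ has been used. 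The third term is pointwise non-negative, and the first term is bounded below by (\ref{a2.31}), which supplies the essential positive contribution $\frac{\dim F(\dim F-1)\delta}{4\beta^{2}}\|\phi\|^{2}+\mathcal{O}_{R}(\beta^{-1}+\varepsilon^{2}\beta^{-2})\|\phi\|^{2}$.

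The substantive step is controlling the anti-commutator cross term. Since $\widehat{c}(\sigma)$ acts on the $\mathcal{F}^{\bot}_{2}$ factor of the $\mathbb{Z}_{2}$-graded tensor product, it anti-commutes with every Clifford action $c_{\beta,\varepsilon}(\beta^{-1}f_{i})$ appearing in the principal symbol of $D^{\mathcal{E}}_{\mathcal{F}\oplus\mathcal{F}^{\bot}_{1},\beta,\varepsilon}$. The two first-order pieces of the anti-commutator therefore cancel, collapsing it to a zeroth-order operator of the schematic form
\begin{equation*}
\sum_{i}c_{\beta,\varepsilon}(\beta^{-1}f_{i})\bigl[R^{-1}\mathfrak{f}'(\rho/R)(\nabla_{f_{i}}\rho)\widehat{c}(\sigma)+\mathfrak{f}(\rho/R)\widehat{c}(\nabla_{f_{i}}\sigma)\bigr],
\end{equation*}
plus analogous contributions from the transverse and vertical Clifford actions. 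Using $|\nabla\rho|\leq 1$, boundedness of $\mathfrak{f}'$, and control of $|\nabla\sigma|$ on the annular support $\{R/4\leq\rho\leq R\}$ of $\mathfrak{f}(\rho/R)$, this cross term contributes at most $(C/R)\beta^{-2}\|\phi\|^{2}+\mathcal{O}_{R}(\beta^{-1})\|\phi\|^{2}$ in inner product.

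Assembling the three pieces, one first chooses $R_{0}$ large enough that $C/R_{0}<\dim F(\dim F-1)\delta/8$, and then $\beta,\varepsilon>0$ small enough (depending on $R$) to absorb the $\mathcal{O}_{R}(\beta^{-1}+\varepsilon^{2}\beta^{-2})$ remainders into the main term. This gives $\langle A^{2}\phi,\phi\rangle\geq\frac{\dim F(\dim F-1)\delta}{16\beta^{2}}\|\phi\|^{2}$, from which (\ref{a2.24}) follows upon taking square roots, with $C_{0}=\sqrt{\dim F(\dim F-1)}/4$. The main obstacle is verifying that the $\widehat{c}(\nabla_{f_{i}}\sigma)$ piece does not introduce a $\beta^{-2}$-order remainder competing with the main $\delta/\beta^{2}$ term; this rests on quantitative control of $|\nabla\sigma|$ away from $s(M)$ via Jacobi-field estimates on the symmetric-space fibers of the Connes fibration, analogous to those underlying the derivation of (\ref{a2.30}) in \cite[Section 2]{Z1}.
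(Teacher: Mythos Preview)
Your proposal is correct and takes essentially the same approach as the paper: the paper's entire argument for this lemma is the sentence ``From (\ref{a2.31}), proceeding as the proof of \cite[Lemma 2.4]{Z1}, one can get the following analogue inequality of \cite[(2.22)]{Z1},'' and your expansion of $A^{2}$, use of (\ref{a2.31}) for the main positive term, and reduction of the anti-commutator to a zeroth-order remainder is exactly the structure of Zhang's proof. The technical point you flag---that the leafwise piece $\mathfrak{f}(\rho/R)\widehat{c}(\nabla_{f_{i}}\sigma)$ must not produce a $\beta^{-2}$ term with coefficient independent of $R$---is precisely what \cite[Lemma 2.4]{Z1} addresses, so your deferral to that reference matches the paper's own deferral.
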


Next we recall the construction of the operator $P^{\mathcal{E}}_{R,\beta,\varepsilon}$ from \cite{Z1}.

Let $\partial \mathcal{M}_R$ bound another oriented manifold $\mathcal{N}_R$ so that $\widetilde{\mathcal{N}}_R=\mathcal{M}_R\cup \mathcal{N}_R$ is a closed manifold. Let $H$ be a Hermitian vector bundle over $\mathcal{M}_R$ such that $(S_{\beta,\varepsilon}(\mathcal{F}\oplus \mathcal{F}^{\bot}_{1})\widehat{\otimes}\Lambda^*(\mathcal{F}^{\bot}_{2})\otimes \mathcal{E})_{+}\oplus H$ is a trivial vector bundle near $\partial \mathcal{M}_R$, under the identification $\widehat{c}(\sigma)+{\rm Id}_{H}$.

By obviously extending the above trivial vector bundles to $\mathcal{N}_R$, we get a $\mathbb{Z}_2$-graded Hermitian vector bundle $\xi=\xi_+\oplus \xi_-$ over $\widetilde{\mathcal{N}}_R$ and an odd self-adjoint endomorphism $V=v+v^*\in\Gamma({\rm End}(\xi))$ (with $v:\Gamma(\xi_+)\to \Gamma(\xi_-), v^*$ being the adjoint of $v$) such that 
\begin{align}
\xi_{\pm}=(S_{\beta,\varepsilon}(\mathcal{F}\oplus \mathcal{F}^{\bot}_{1})\widehat{\otimes}\Lambda^*(\mathcal{F}^{\bot}_{2})\otimes \mathcal{E})_{\pm}\oplus H
\end{align}
over $\mathcal{M}_R$, $V$ is invertible on $\mathcal{N}_R$ and
\begin{align}\label{a2.8}
V=\mathfrak{f}\left({\rho}\over R\right)\widehat{c}(\sigma)+{\rm Id}_{H}
\end{align}
on $\mathcal{M}_R$, which is invertible on $\mathcal{M}_R\setminus \mathcal{M}_{{R}\over{2}}$.

Recall that $h({\rho\over R})$ vanishes near $\partial \mathcal{M}_R$. We extend it to a function on $\widetilde{\mathcal{N}}_R$ which equals to zero on $\mathcal{N}_R$, and we denote the resulting function on $\widetilde{\mathcal{N}}_R$ by $\widetilde{h}_R$. Let $\pi_{\widetilde{\mathcal{N}}_R}:T\widetilde{\mathcal{N}}_R\to \widetilde{\mathcal{N}}_R$ be the projection of the tangent bundle of $\widetilde{\mathcal{N}}_R$. Let $\gamma^{\widetilde{\mathcal{N}}_R}\in {\rm Hom}(\pi^*_{\widetilde{\mathcal{N}}_R}\xi_+,\pi^*_{\widetilde{\mathcal{N}}_R}\xi_-)$ be the symbol defined by 
\begin{align}\label{a2.9}
\gamma^{\widetilde{\mathcal{N}}_R}(p,w)=\pi^*_{\widetilde{\mathcal{N}}_R}\left(\sqrt{-1}\widetilde{h}^2_R c_{\beta,\varepsilon}(w)+v(p)\right)\ {\rm for}\ p\in\widetilde{\mathcal{N}}_R,\ w\in T_p \widetilde{\mathcal{N}}_R. 
\end{align}
By (\ref{a2.8}) and (\ref{a2.9}), $\gamma^{\widetilde{\mathcal{N}}_R}$ is singular only if $w=0$ and $p\in \mathcal{M}_R$. Thus $\gamma^{\widetilde{\mathcal{N}}_R}$ is an elliptic symbol.

On the other hand, it is clear that $\widetilde{h}_R D^{\mathcal{E}}_{\mathcal{F}\oplus \mathcal{F}^{\bot}_1,\beta,\varepsilon}\widetilde{h}_R$ is well defined on $\widetilde{\mathcal{N}}_R$ if we define it to equal to zero on $\widetilde{\mathcal{N}}_R\setminus \mathcal{M}_R$.

Let $A:L^2(\xi)\to L^2(\xi)$ be a second order positive elliptic differential operator on $\widetilde{\mathcal{N}}_R$ preserving the $\mathbb{Z}_2$-grading of $\xi=\xi_+\oplus \xi_{-}$, such that  its symbol equals to $|\eta|^2$ at $\eta\in T\widetilde{\mathcal{N}}_R$. Let $P^{\mathcal{E}}_{R,\beta,\varepsilon}:L^2(\xi)\to L^2(\xi)$ be the zeroth order pseudodifferential operator on $\widetilde{\mathcal{N}}_R$ defined by
\begin{align}
P^{\mathcal{E}}_{R,\beta,\varepsilon}=A^{-{1\over 4}}\widetilde{h}_R D^{\mathcal{E}}_{\mathcal{F}\oplus \mathcal{F}^{\bot}_1,\beta,\varepsilon}\widetilde{h}_R A^{-{1\over 4}}+{V\over \beta}. 
\end{align}
Let $P^{\mathcal{E}}_{R,\beta,\varepsilon,+}:L^2(\xi_{+})\to L^2(\xi_{-})$ be the obvious restriction. Moreover, the analogue of \cite[(2.34)]{Z1} now takes the form
\begin{align}\label{2.6}
{\rm ind}\left(P^{\mathcal{E}}_{R,\beta,\varepsilon,+}\right)=\left\langle \widehat{A}(TM){\rm ch}(E),[M]\right\rangle.
\end{align}

For any $0\leq t\leq 1$, set
\begin{align}
P^{\mathcal{E}}_{R,\beta,\varepsilon,+}(t)=A^{-{1\over 4}}\widetilde{h}_R D^{\mathcal{E}}_{\mathcal{F}\oplus \mathcal{F}^{\bot}_1,\beta,\varepsilon}\widetilde{h}_R A^{-{1\over 4}}+{{tV}\over{\beta}}+A^{-{1\over 4}}{{(1-t)V}\over{\beta}}A^{-{1\over 4}}.
\end{align}
Then $P^{\mathcal{E}}_{R,\beta,\varepsilon,+}(t)$ is a smooth family of zeroth order pseudodifferential operators such that the corresponding symbol $\gamma(P^{\mathcal{E}}_{R,\beta,\varepsilon,+}(t))$ is elliptic for $0<t\leq 1$. Thus $P^{\mathcal{E}_{R,\beta,\varepsilon,+}}(t)$ is a continuous family of Fredholm operators for $0<t\leq 1$ with $P^{\mathcal{E}}_{R,\beta,\varepsilon,+}(1)=P^{\mathcal{E}}_{R,\beta,\varepsilon,+}$.

By Lemma \ref{l2.1} and \cite[Lemma 2.4(ii)]{Z1}, proceeding as the proof of \cite[Proposition 2.5]{Z1}, one has the following proposition. 

\begin{proposition}\label{p2.2}
There exist $R$, $\beta$, $\varepsilon>0$ such that the following identity holds:
\begin{align}
{\rm dim}\left({\rm ker}\left(P^{\mathcal{E}}_{R,\beta,\varepsilon,+}(0)\right)\right)={\rm dim}\left({\rm ker}\left(P^{\mathcal{E}}_{R,\beta,\varepsilon,+}(0)^*\right)\right)=0.
\end{align}
\end{proposition}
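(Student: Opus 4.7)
The proof strategy is to view $P^{\mathcal{E}}_{R,\beta,\varepsilon,+}(0)$ as the $\xi_+ \to \xi_-$ component of the self-adjoint operator
$$P^{\mathcal{E}}_{R,\beta,\varepsilon}(0) = A^{-1/4}\bigl(\widetilde{h}_R D^{\mathcal{E}}_{\mathcal{F}\oplus\mathcal{F}_1^\perp,\beta,\varepsilon}\widetilde{h}_R + V/\beta\bigr)A^{-1/4}$$
on $L^2(\xi)$, which is odd with respect to the $\mathbb{Z}_2$-grading $\xi = \xi_+ \oplus \xi_-$. A uniform positive lower bound on $(P^{\mathcal{E}}_{R,\beta,\varepsilon}(0))^2$ would simultaneously kill both $\ker P^{\mathcal{E}}_{R,\beta,\varepsilon,+}(0)$ and $\ker P^{\mathcal{E}}_{R,\beta,\varepsilon,+}(0)^*$. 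Since $A^{\pm 1/4}$ is a bounded invertible pseudodifferential operator, the task reduces to producing such a lower bound for the conjugated operator $\widetilde{h}_R D^{\mathcal{E}}\widetilde{h}_R + V/\beta$ in the appropriate Sobolev norms.

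To obtain that bound I would introduce an IMS-type partition of unity $\chi_1^2 + \chi_2^2 = 1$ on $\widetilde{\mathcal{N}}_R$ with $\chi_1$ supported inside $\mathcal{M}_{R/2}$ (slightly enlarged) and $\chi_2$ supported in the complement of $\mathcal{M}_{R/4}$. On the support of $\chi_2$, the endomorphism $V$ is invertible: on $\mathcal{M}_R \setminus \mathcal{M}_{R/2}$ one has $\mathfrak{f}(\rho/R) = 1$, so $V = \widehat{c}(\sigma) + \mathrm{Id}_H$ and $V^2 \geq c > 0$ by (2.8); on $\mathcal{N}_R$ invertibility of $V$ is given by construction and $\widetilde{h}_R \equiv 0$, so the Dirac term disappears. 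On the support of $\chi_1$, sections fall within the scope of Lemma \ref{l2.1} after one rewrites $\widetilde{h}_R D^{\mathcal{E}}\widetilde{h}_R + V/\beta$ in the shape $D^{\mathcal{E}}_{\mathcal{F}\oplus\mathcal{F}_1^\perp,\beta,\varepsilon} + \mathfrak{f}(\rho/R)\widehat{c}(\sigma)/\beta$ modulo a positive term $\mathrm{Id}_H/\beta$ and commutators $[\widetilde{h}_R, D^{\mathcal{E}}]$ of size $O(1/R)$; Lemma \ref{l2.1} then yields a lower bound of order $C_0\sqrt{\delta}/\beta$ on this piece.

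Combining the two estimates via the localization identity produces a global lower bound, provided all cutoff and commutator errors are absorbed. I would choose parameters in the order: first pick $R \geq R_0$ large enough that the $O(1/R)$ errors from commuting with $\widetilde{h}_R$ and with the $\chi_i$ are small compared to $C_0\sqrt{\delta}/\beta$, then choose $\beta, \varepsilon > 0$ small enough (as allowed by Lemma \ref{l2.1}) so that the $\mathcal{O}_R(1/\beta + \varepsilon^2/\beta^2)$ remainder from (\ref{a2.30})--(\ref{add2.6}) is subleading. This gives $(P^{\mathcal{E}}_{R,\beta,\varepsilon}(0))^2 \geq c\beta^{-2}\,\mathrm{Id}$ on all of $L^2(\xi)$, hence the triviality of both kernels.

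The main obstacle I anticipate is the bookkeeping of symbolic errors introduced by the $A^{\pm 1/4}$ factors: one has to verify that commuting $A^{\pm 1/4}$ past $\widetilde{h}_R$ and past the cutoffs $\chi_i$ produces only lower-order corrections that do not spoil the leading $1/\beta$ lower bound coming from Lemma \ref{l2.1}. Because the effect of the twist by $\mathcal{E}$ has already been absorbed into Lemma \ref{l2.1} via the Llarull-type pointwise estimate (\ref{add2.6}), once this bookkeeping is carried out the remainder of the argument transcribes essentially verbatim from \cite[Proposition 2.5]{Z1}.
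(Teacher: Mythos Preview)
Your proposal is correct and follows the same route as the paper, which simply invokes Lemma~\ref{l2.1} together with \cite[Lemma~2.4(ii)]{Z1} and then transcribes the proof of \cite[Proposition~2.5]{Z1}; your IMS localization into an interior piece governed by Lemma~\ref{l2.1} and an exterior piece governed by the invertibility of $V$, followed by absorption of the $A^{\pm 1/4}$ commutator errors, is exactly what that transcription amounts to.

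One small correction to the bookkeeping: with $\chi_2$ supported in the complement of $\mathcal{M}_{R/4}$, your claimed uniform invertibility of $V$ fails on the annulus $\mathcal{M}_{R/2}\setminus\mathcal{M}_{R/4}$, since there $\mathfrak{f}(\rho/R)$ runs from $0$ to $1$ and $V=\mathfrak{f}(\rho/R)\,\widehat{c}(\sigma)+\mathrm{Id}_H$ has no uniform lower bound on the spinor summand. The fix is simply to shift the supports so that $\chi_1$ is supported where $\widetilde{h}_R\equiv 1$ (inside $\mathcal{M}_{3R/4}$) and $\chi_2$ is supported where $\mathfrak{f}\equiv 1$ (outside $\mathcal{M}_{R/2}$); these overlap on $\mathcal{M}_{3R/4}\setminus\mathcal{M}_{R/2}$, and on the support of $\chi_2$ one now genuinely has $V^2\geq c>0$ together with a controllable Dirac contribution. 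This corrected configuration is precisely what underlies the estimate \cite[Lemma~2.4(ii)]{Z1} that the paper cites alongside Lemma~\ref{l2.1}.
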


By (\ref{2.6}), Proposition \ref{p2.2} and \cite[Theorem 0.1]{Z1}, one has
\begin{multline}\label{2.7}
0=\left\langle \widehat{A}(TM){\rm ch}(E),[M]\right\rangle={\rm rk}(E_0)\widehat{A}(M)\\+\left\langle \widehat{A}(TM)f^*\left({\rm ch}(E_0)-{\rm rk}(E_0)\right),[M]\right\rangle
={\rm deg}(f)\left\langle {\rm ch}(E_0),S^{2n}\right\rangle,
\end{multline}
which contradicts with ${\rm deg}(f)\left\langle {\rm ch}(E_0),S^{2n}\right\rangle\neq 0$.

\subsection{The dimension of $M$ is odd}
Let $M$ be a compact spin manifold of dimension $2n-1$, with Riemannian metric $g$. Let $S_r ^{2n-1}$ be $(2n-1)$-sphere of radius $r$ with the standard metric $g_0$. Let $F$ be a foliation on $M$. Let $f:M\to S^{2n-1}$ be a map of non-zero degree and $f_{*}|_{\Gamma(F)}$ is $1$-contracting.

Consider 
\begin{align}
M\times S_r^1\xrightarrow{f\times {1\over r}id}S^{2n-1}\times S^1\xrightarrow{\mathfrak{h}}S^{2n-1}\wedge S^1 \cong S^{2n},
\end{align}
where $S^1_r$ is the one-dimensional sphere of radius $r$, $f\times {1\over r}id$ is defined as $(f\times {1\over r}id)(p,t)=(f(p), {t\over r})$, $(p,t)\in M\times S^1$, and $\mathfrak{h}$ is map of non-zero degree. $\mathfrak{h}|_{F\times S^1}$ is $1$-contracting.

Consider now the following metric. On $M\times S^1_r$, $g+ds^2$ where $ds$ is the standard metric on $S^1_r$; on $S^{2n-1}\times S^1$, $g_0+ds^2$ where $ds$ is the standard metric on $S^1$; and on $S^{2n}$, $\widetilde{g}$ is the standard metric on the unit sphere.

The compose map $\widetilde{f}=\mathfrak{h}\circ (f\times {1\over r}id)$ is of non-zero degree form $M^{2n-1}\times S^1\to S^{2n}$. $\widetilde{f}|_{F\times S^1}$ is also $1$-contracting, for $v\in \Gamma(F)$,
\begin{align}
\left\|\widetilde{f}_* (v,t)\right\|=\left\|\mathfrak{h}_*\left(f_* v,{t\over r}\right)\right\|\leq \|f_* v\|+\left\|{t\over r}\right\|
\leq \|v\|+{1\over r}\|t\|\leq \|v\|+\|t\|.
\end{align}
We assume $r>1$.

We can now apply the same method used for the even-dimensional case. Construct complex spinor bundles $S$ over $M^{2n-1}\times S^1_r$ and $E_0$ over $S^{2n}$, respectively; and consider the bundle $S\otimes E$ over $M^{2n-1}\times S^1_r$, where $E=\widetilde{f}^* E_0$.

Choose a basis $\{f_1,\dots,f_{2n-1},f_{2n}\}$ of $(g+ds^2)$-orthonormal adapted tangent vectors around $x\in M^{2n-1}\times S^1_r$ such that $(\nabla f_k)_x=0$ for each $k$ and such that $f_1,\dots,f_{2n-1}$ are tangent to $M^{2n-1}$ and $e_{2n}$ is tangent to $S^1_r$. As before choose $\widetilde{g}$-orthonormal basis $\{e_1,\dots,e_{2n}\}$ around $\widetilde{f}(x)$ in $S^{2n}$. We assume that $f_i$, $1\leq i\leq {\rm dim}F$ is a basis of $F$.

Therefore, we can find positive scalars $\{\lambda_i\}_{i=1}^{2n}$ such that $e_i=\lambda_i \widetilde{f}_* f_i$. Then we have that
$$1=\widetilde{g}\left(e_i,e_j\right)=\widetilde{g}\left(\lambda_i \widetilde{f}_* f_i, \lambda_i \widetilde{f}_* f_i\right)=\lambda^2_i \widetilde{g}\left(\widetilde{f}_*f_i,\widetilde{f}_*f_i\right),$$
thus for $1\leq i\leq {\rm dim}F$,
\begin{align}
1=\lambda^2_i \widetilde{g}\left(\widetilde{f}_*f_i,\widetilde{f}_* f_i\right)\leq \lambda_i^2 g_0 \left(f_* f_i,f_* f_i\right)\leq \lambda^2_i g(e_i,e_i)=\lambda_i^2
\end{align}
and $1\leq \lambda^2_i$.

For $i=2n$,
$$1=\lambda_{2n}^2\widetilde{g}\left(\widetilde{f}_* f_{2n},\widetilde{f}_{*}f_{2n}\right)\leq \lambda^2_{2n}ds^2 \left({f_{2n}\over r},{f_{2n}\over r}\right).$$
Then 
$$r^2\leq \lambda^2_{2n}. $$
In this case, (\ref{a2.31}) is replaced by
\begin{multline}\label{2.14}
\left\langle  \left(D^{\mathcal{E}}_{\mathcal{F}\oplus \mathcal{F}^{\bot}_{1},\beta,\varepsilon}\right)^2 \phi,\phi\right\rangle\\
\geq {1\over {4\beta^2}}{\rm dim}F ({\rm dim}F-1)\left(\widetilde{k}^\mathcal{F}_g-1-{2\over{({\rm dim}F-1)r}}\right)\|\phi\|^2+\mathcal{O}_{R,r}\left({1\over \beta}+{\varepsilon^2\over \beta^2}\right)\|\phi\|^2. 
\end{multline}

If $\widetilde{k}^{F}_g>1$, since (\ref{2.14}) is valid for all $r>1$, one also gets ${\rm ind}\left(P^{\mathcal{E}}_{R,\beta,\varepsilon,+}\right)=0$.
But the Atiyah-Singer index theorem gives (see (\ref{2.6}), (\ref{2.7}))
$${\rm ind}\left(P^{\mathcal{E}}_{R,\beta,\varepsilon,+}\right)\neq 0.$$

As in \cite[Section 2.5]{Z1}, the same proof applies for the case where $F$ is spin, with an obvious modification of the (twisted) sub-Dirac operators (cf. \cite[(2.58)]{Z1}), using \cite[Theorem 0.2]{C} in (\ref{2.7}).

\begin{remark}(cf. \cite{GL}, \cite{L})
Recall that a map $f:M\to N$ between Riemannian manifold is $(\varepsilon,\Lambda^k )$-contracting if 
\begin{align}
\|f^*\alpha\|\leq \varepsilon \|\alpha\|,\ \ \alpha\in \Lambda^k (N).
\end{align}
Note that ``1-contracting" means $(1,\Lambda^1)$-contracting. 
\end{remark}

We have the following immediate consequence.

\begin{theorem}\label{thm2.4}
Let $M$ be a closed Riemannian manifold of dimension $n$ and $F$ be a foliation on $M$. Suppose $TM$ or $F$ is spin and there exists a smooth map $f:(M,g)\to (S^n, g_0)$ of non-zero degree such that for any $v,w\in \Gamma(F)$, $\|f_* v\wedge f_* w\|\leq \|v\wedge w\|$. Then there exits $x\in M$ with $\widetilde{k}^F_g(x)\leq 1$. 
\end{theorem}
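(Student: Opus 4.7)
The plan is to run the proof of Theorem \ref{thm1.1} essentially verbatim, noting that the only place where the $1$-contracting hypothesis on $f|_F$ actually enters is in the pointwise curvature lower bound (\ref{add2.6}), and that this bound survives under the weaker area-contraction hypothesis of Theorem \ref{thm2.4}.

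First I would translate the hypothesis into the appropriate pointwise product inequality on the eigenvalues $\lambda_j$. Choose an adapted $g$-orthonormal frame $\{f_j\}_{j=1}^{\dim F}$ of $F$ near $p$ and the corresponding frame $\{e_j\}$ of $TS^n$ near $f(p)$ with $e_j=\lambda_j f_*f_j$. Then the assumption $\|f_*v\wedge f_*w\|\leq \|v\wedge w\|$ for $v,w\in\Gamma(F)$ specializes to $\lambda_i^{-1}\lambda_j^{-1}\leq 1$, giving
\begin{equation*}
\lambda_i\lambda_j\geq 1\qquad\text{for all }1\leq i\neq j\leq \dim F.
\end{equation*}
Inspecting the pair-by-pair computation in Llarull's Lemmas 4.3 and 4.5, the lower bound on $\sum_{i\neq j}R^{E_0}(e_i,e_j)c(\widetilde e_i)c(\widetilde e_j)$ is assembled from contributions that depend on $\lambda_i$ and $\lambda_j$ only through the product $1/(\lambda_i\lambda_j)$; the individual bounds $\lambda_j\geq 1$ are never invoked. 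Consequently (\ref{add2.6}) holds unchanged. On the Connes fibration this transfers without modification because $\nabla^{\mathcal{E}}=\pi^*\nabla^E$, so $R^{\mathcal{E}}(f_i,f_j)$ is completely determined by $R^{E_0}$ evaluated on $f_*f_i,f_*f_j$, where the $\lambda$-data enters precisely in the required product form.

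Once (\ref{add2.6}) is in hand, every subsequent step of Section 2 applies mutatis mutandis: the Bochner identity (\ref{a2.30}), the sharpened bound (\ref{a2.31}), Lemma \ref{l2.1}, the pseudodifferential deformation $P^{\mathcal{E}}_{R,\beta,\varepsilon,+}(t)$, Proposition \ref{p2.2}, and the topological contradiction derived from (\ref{2.6})--(\ref{2.7}). The case where $F$ is spin (rather than $TM$) is handled by the sub-Dirac modification pointed out at the end of Section 2.2. For $\dim M$ odd I would repeat the $S^1_r$-suspension, forming $\widetilde f=\mathfrak{h}\circ(f\times \frac{1}{r}\mathrm{id})$ and verifying that $\widetilde f|_{F\oplus TS^1_r}$ remains area-contracting modulo an error $O(1/r)$: pure $F$-wedges are controlled by hypothesis, while wedges involving the $S^1_r$-direction scale by $1/r$. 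This correction feeds into (\ref{2.14}) exactly as in the proof of Theorem \ref{thm1.1}, and letting $r\to\infty$ under $\widetilde k^F_g>1$ reproduces the index contradiction.

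The only step demanding genuine attention is the first one: a careful audit of Llarull's Lemmas 4.3 and 4.5 to confirm that each pairwise contribution really depends on $(\lambda_i,\lambda_j)$ only through $\lambda_i\lambda_j$. Once this is verified, Theorem \ref{thm2.4} is an \emph{immediate} consequence of the machinery already constructed in Section 2, as the author indicates.
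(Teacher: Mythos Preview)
Your proposal is correct and follows essentially the same approach as the paper: the paper's proof likewise observes that the area-contraction hypothesis yields $\lambda_i\lambda_j\geq 1$ for $1\leq i\neq j\leq\dim F$ and then defers to the proof of Theorem~\ref{thm1.1}, since Llarull's pairwise curvature estimates depend on the $\lambda$'s only through these products. Your write-up is in fact more explicit than the paper's (particularly regarding the odd-dimensional suspension), but the underlying argument is identical.
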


\begin{proof}
It follows form the proof of Theorem \ref{thm1.1}. We only need to point out that $\{\lambda_i\}_{i=1}^{{\rm dim}F}$ satisfy 
\begin{multline}
1=\|e_i\wedge e_j\|_{g_0}=\|\lambda_i f_* f_i\wedge \lambda_j f_* f_j\|_{g_0}=\lambda_i \lambda_j \|f_*(f_i\wedge f_j)\|_{g_0}\\
\leq \lambda_i \lambda_j \|f_i\wedge f_j\|_g=\lambda_i\lambda_j.
\end{multline}
Thus $\lambda_i\lambda_j\geq 1$. Then proceeding as the proof of Theorem \ref{thm1.1}, we get Theorem \ref{thm2.4}. 
\end{proof}

\bibliographystyle{amsplain}

\end{document}